\newtheorem{thm}{Theorem}
\newtheorem{lem}[thm]{Lemma}
\newtheorem{prop}[thm]{Proposition}
\DeclareMathOperator{\PC}{PSL_2(\CC)}
\DeclareMathOperator{\HHH}{\mathcal{H}}
\DeclareMathOperator{\CP1}{\mathbb{C}\mathbb{P}^1}      %
\DeclareMathOperator{\lcm}{lcm}
\DeclareMathOperator{\RE}{Re}
\DeclareMathOperator{\IM}{Im}
\newcommand{\NN}{\mathbb{N}}      %
\newcommand{\ZZ}{\mathbb{Z}}      
\newcommand{\RR}{\mathbb{R}}      
\newcommand{\QQ}{\mathbb{Q}}      
\newcommand{\CC}{\mathbb{C}}      
\newcommand{\HH}{\mathbb{H}}      
\begin{document}


\title{Introduction to Chabauty topology and \\Pictures of the Chabauty space of $\CC^\ast$}

\author{Hyungryul Baik \& Lucien Clavier \\Department of Mathematics\\310 Malott Hall, Cornell University \\Ithaca, New York 14853-4201 USA}

\maketitle

\begin{abstract} The aim of this paper is to provide a gentle introduction to Chabauty topology, while very little background knowledge is assumed. As an example, we provide pictures for the Chabauty space of $\CC^\ast$.
Note that the description of this space is not new; however the pictures are novel.
\end{abstract}

\section{Hausdorff distance and Chabauty Space} 
Suppose you have a metric space $(X, d)$, and two closed subsets $A$ and $B$ of $X$.
 When can we say $A$ is close to $B$? 
 First note that the following value is small exactly when some point of $A$ is close to some point of $B$:
 $$d(A,B) := \inf_{a \in A, b \in B} d(a, b). $$ 
 For instance, if $A$ and $B$ intersect, the above quantity is zero.
Geometrically speaking, we would like to say that $A$ is close to $B$ if every point in $A$ is close to some point in $B$, and every point in $B$ is closed some point in $A$.
 With this in mind, we can define the distance between $A$ and $B$ to be:
 $$ d_H(A,B) = \max\{ \sup_{a \in A} \inf_{b \in B} d(a,b), \sup_{b \in B} \inf_{a \in A} d(a,b)\},$$ and we say $A$ and $B$ are close to each other if $d_H(A, B)$ is small.
 This new quantity measures exactly what we wanted:
 $\sup_{a \in A} \inf_{b \in B} d(a,b)$ is small if every point in $A$ is close to some point in $B$, and $ \sup_{b \in B} \inf_{a \in A} d(a,b)$ is small if every point in $B$ is close to some point in $A$.
 As a consequence, if $d_H(A,B)$ is small, then $A$ and $B$ look very similar as geometric objects in $X$.
 Let $F(X)$ be the set of all closed subsets of $X$ and let us measure how far any two elements of $F(X)$ are apart with $d_H$.
 This is really a distance when $X$ is compact (recall that any closed subspace of a compact space is compact, so that $F(X)$ is now the set of all compact subsets of $X$).
 In this case, we call $d_H$ \emph{the Hausdorff distance}.
 The topology of $F(X)$ induced by the metric $d_H$ is not easy to understand in general.
 For instance, when $X$ is simply a closed interval, $F(X)$ is already homeomorphic to a Hilbert cube.
 See the beautiful paper \cite{West} by R. Schori and J. West.

We can still make $d_H$ as a metric for some non-compact metric spaces $X$.
 More precisely, we only need $X$ to admit a one-point compactification which is metrizable.
 There are many examples of such spaces.
 The following lemma gives you one criterion.
 \begin{lem} \label{oneptcompact} Let $X$ be a second-countable, locally compact metric space.
 Then its one-point compactification $\overline{X}$ is metrizable.
 \end{lem}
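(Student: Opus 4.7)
The plan is to reduce the claim to the Urysohn metrization theorem: it suffices to verify that $\overline{X}$ is Hausdorff, regular, and second-countable. Compactness and Hausdorffness of $\overline{X}$ from local compactness and Hausdorffness of $X$ are standard facts about one-point compactifications, and in a compact Hausdorff space regularity (indeed normality) is automatic. So the real content is to exhibit a countable base for $\overline{X}$.

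For this I would first use that a second-countable, locally compact Hausdorff space is $\sigma$-compact by an exhaustion argument: pick a countable base $\{V_n\}$ of $X$ consisting of relatively compact open sets (such a base exists because every point has a compact neighborhood and $X$ is second-countable, so one can refine any countable base by intersecting with interiors of compact neighborhoods). Then inductively choose compact sets $K_n$ with $K_n \subset \mathrm{int}(K_{n+1})$ and $K_n \supset \overline{V_1}\cup \cdots\cup \overline{V_n}$, which gives $X = \bigcup_n K_n$ with $K_n \subset \mathrm{int}(K_{n+1})$.

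Next I would propose the countable family
\[
\mathcal{B} \;=\; \{V_n : n \in \NN\} \;\cup\; \{\{\infty\} \cup (X \setminus K_n) : n \in \NN\}
\]
as a base for $\overline{X}$. The sets of the first kind plainly form a base for the subspace topology on $X \subset \overline{X}$. For a basic open neighborhood of $\infty$, by definition of the one-point compactification it has the form $\{\infty\}\cup(X\setminus K)$ with $K \subset X$ compact; the key step is to show $K \subset K_n$ for some $n$, which follows because $\{\mathrm{int}(K_n)\}$ is an open cover of $X$ (hence of $K$) and the $K_n$ are nested, so a finite subcover collapses to a single $K_n$. Then $\{\infty\}\cup(X\setminus K_n) \subset \{\infty\}\cup(X\setminus K)$, and $\mathcal{B}$ is a countable base.

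With second-countability in hand, Urysohn's metrization theorem applies and produces a compatible metric on $\overline{X}$. I expect the main obstacle, modest as it is, to be the bookkeeping for second-countability: one must both choose a base of relatively compact open sets (using local compactness to refine the given countable base) and verify that compact subsets of $X$ are eventually absorbed by the exhaustion $\{K_n\}$; the rest of the argument consists of classical facts that I would quote rather than re-prove.
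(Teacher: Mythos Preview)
Your argument is correct: reducing to Urysohn by checking that $\overline{X}$ is compact Hausdorff (hence regular) and second-countable is the standard route, and your construction of a countable base via a compact exhaustion is sound. One small imprecision: the parenthetical ``intersecting with interiors of compact neighborhoods'' could produce uncountably many sets; the clean way is to keep from the original countable base $\{B_n\}$ only those $B_n$ with compact closure, which still form a base by local compactness and regularity. This is cosmetic, not a gap.

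As for comparison with the paper: there is nothing to compare, since the paper does not prove the lemma but simply refers the reader to Section~2 of \cite{BC1}. Your self-contained argument is therefore more than what the paper itself supplies.
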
 For the proof of this lemma, see Section 2 of \cite{BC1}.
 From now on, let $X$ be a non-empty metric space which admits a one-point metrizable compactification $\overline{X}$.
 Let $P_\infty$ be the point at infinity added to $X$ to form $\overline{X}$.
 Then for each element $A$ of $F(X)$, we can compactify it as a subset of $\overline{X}$ by adding $P_\infty$ to $A$.
 Let $\overline{A}$ be the resulting compact subset of $\overline{X}$.
 We compactify all elements of $F(X)$ in this way simultaneously.
 Namely, let $\overline{F(X)}$ be defined by $$ \overline{F(X)} = \{ \overline{A} = A \cup P_\infty : A \in F(X) \}.
 $$ Since $\overline{X}$ is metrizable, $d_H$ can be defined on $\overline{F(X)}$ with this metric on $\overline{X}$.
 Now $(\overline{F(X)}, d_H)$ is a metric space such that if $d_H(\overline{A}, \overline{B})$ is small, then $A$ and $B$ look similar in $X$! This simple-minded trick of adding one extra point to every set simultaneously is very useful to study subsets of non-compact metric spaces geometrically.

Instead of considering an arbitrary metric space, we can consider locally compact Lie groups.
If you do not know what Lie groups are,
 just keep in mind that these groups happen to be metric spaces with metrizable one-point compactification.
 Let $G$ be such a group.
 Since it is a group, it makes sense to consider its closed subgroups instead of closed subsets.
 Let $F(G)$ be the set of all closed subgroups of $G$.
 Then we can define the metric space $(\overline{F(G)}, d_H)$ as above.
 This is called the \emph{Chabauty space} of $G$.
 The \emph{Chabauty topology} is the topology on $\overline{F(G)}$ induced by the metric $d_H$.
 It was introduced by Claude Chabauty in 1905 \cite{Cha1} to study the space of all closed subgroups of a locally compact group.
 Interested readers are referred to the very nice introduction to Chabauty topology \cite{Harpe} by Pierre de la Harpe.

To grasp some feeling about what Chabauty spaces look like, we will start with a very simple example, the real line.

\section{Fundamental example: the Chabauty space of $\RR$} 
Consider the real line $\RR$ as an additive group.
 The closed subgroups of $\RR$ are either $\RR$ itself, or cyclic group generated by some real number.
 Let $G_r$ denote the group generated by $r$, so $G_r =r \ZZ= \{ \ldots, -2r, r, 0, r, 2r, \ldots \}$.
 Since $G_r = G_{-r}$, we may always assume that $r \ge 0$.
 Note that $G_0$ is the trivial group $\{0\}$.
 We would like to study the space of these groups in the Chabauty topology.
 As described in the previous section, we perform a simultaneous one-point compactification by adding $\infty$ to these subgroups of $\RR$.
 By Lemma \ref{oneptcompact}, $\overline{\RR} = \RR \cup \{\infty\}$ is a metric space with its induced topology.
 Let $d$ denote this metric.

Let's think about what would happen to $\overline{G_r}$ when $r$ tends to $0$.
 Algebraically, it is natural to think the limit $\lim\limits_{r \to 0} G_r$ to be $G_0$.
 On the other hand, $G_r$ becomes more dense as a subset of $\RR$ if $r$ gets smaller.
 If you observe only through a frame of fixed size, then you will see more and more points, and it is natural to expect that the geometric limit is everything.
 Indeed, we can prove the following lemma.
 \begin{lem} \label{ChabR2} $\overline{G_r}$ converges to $\overline{\RR}$ as $r \to 0$.
 \end{lem}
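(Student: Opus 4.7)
The plan is to verify that $d_H(\overline{G_r}, \overline{\RR}) \to 0$ as $r \to 0^+$. Writing out the definition of $d_H$, one of the two sup-inf terms is trivially handled: since $\overline{G_r} \subseteq \overline{\RR}$, we have
\[
\sup_{a \in \overline{G_r}} \inf_{b \in \overline{\RR}} d(a,b) = 0.
\]
So the whole task reduces to controlling $\sup_{y \in \overline{\RR}} \inf_{a \in \overline{G_r}} d(y,a)$, i.e.\ to showing that every point of $\overline{\RR}$ admits an element of $\overline{G_r}$ within $d$-distance $\varepsilon$, provided $r$ is small enough.

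The main subtlety is that the metric $d$ on $\overline{\RR}$ given by Lemma \ref{oneptcompact} is \emph{not} the Euclidean metric; it is only known to induce the one-point compactification topology. So the naive estimate ``every real is within $r/2$ of some $nr$'' is about the wrong distance, and we must transfer it to $d$. The step I expect to be the core of the argument is bridging these two metrics on a compact piece of $\RR$ by a uniform continuity argument.

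Concretely, given $\varepsilon > 0$, I would proceed in three steps. First, since the topology of $\overline{\RR}$ at $\infty$ has as a neighborhood basis the complements of bounded sets, I would pick $M > 0$ large enough that $|x| \geq M$ implies $d(x, \infty) < \varepsilon$. This takes care of points $y \in \overline{\RR}$ with $|y| > M$ (and of $y = \infty$), because $\infty \in \overline{G_r}$. Second, set $K = [-M-1, M+1]$; $K$ is Euclidean-compact, and the identity map $(K, |\cdot|) \to (K, d)$ is a continuous bijection between compact Hausdorff spaces, hence a homeomorphism, hence uniformly continuous. So there exists $\delta \in (0,1)$ such that $|x-y| < \delta$ and $x,y \in K$ imply $d(x,y) < \varepsilon$. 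Third, pick any $r < \delta$ and let $y \in \overline{\RR}$ with $|y| \leq M$. The nearest multiple of $r$, say $nr$, satisfies $|y - nr| \leq r/2 < \delta < 1$, so $nr \in K$, and thus $d(y, nr) < \varepsilon$ with $nr \in G_r \subseteq \overline{G_r}$.

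Combining the two cases gives $\sup_{y \in \overline{\RR}} \inf_{a \in \overline{G_r}} d(y, a) \leq \varepsilon$ for all $r < \delta$, hence $d_H(\overline{G_r}, \overline{\RR}) \leq \varepsilon$. Letting $\varepsilon \to 0$ yields the convergence. The only nontrivial ingredient is the uniform continuity of the identity $(K,|\cdot|)\to (K,d)$, which is where compactness of $\overline{\RR}$ really pays off; the rest is bookkeeping.
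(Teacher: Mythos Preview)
Your proof is correct. The paper does not actually give a proof of this lemma: it only says the argument is similar to that of Lemma~\ref{ChabR} and refers the reader to Figure~\ref{fig:realine1}. Your write-up is therefore \emph{more} detailed than what the paper provides.

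Your approach is exactly the natural adaptation of the paper's proof of Lemma~\ref{ChabR}: split $\overline{\RR}$ into a neighborhood of $\infty$ (handled because $\infty\in\overline{G_r}$) and a compact Euclidean piece (handled because $G_r$ is $r/2$-dense there). The one genuinely new ingredient you add is the uniform-continuity step transferring the Euclidean estimate $|y-nr|\le r/2$ into a $d$-estimate on $K=[-M-1,M+1]$. The paper's informal style glosses over this point entirely (in Lemma~\ref{ChabR} the analogous issue does not arise, since there the only approximation needed is near $\infty$, where one works directly with the $d$-ball). Your handling of it via ``identity map between compact Hausdorff topologies is a homeomorphism, hence uniformly continuous'' is clean and correct; this is the right level of care if one wants a proof that does not secretly assume $d$ is comparable to the Euclidean metric.
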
 
It is easy prove Lemma \ref{ChabR2} rigorously and the proof will be similar to the one for Lemma \ref{ChabR}.
 But instead of repeating a similar proof twice, let's look at the Figure \ref{fig:realine1}.

\begin{figure}[ht] \begin{center} \includegraphics[scale=0.8]{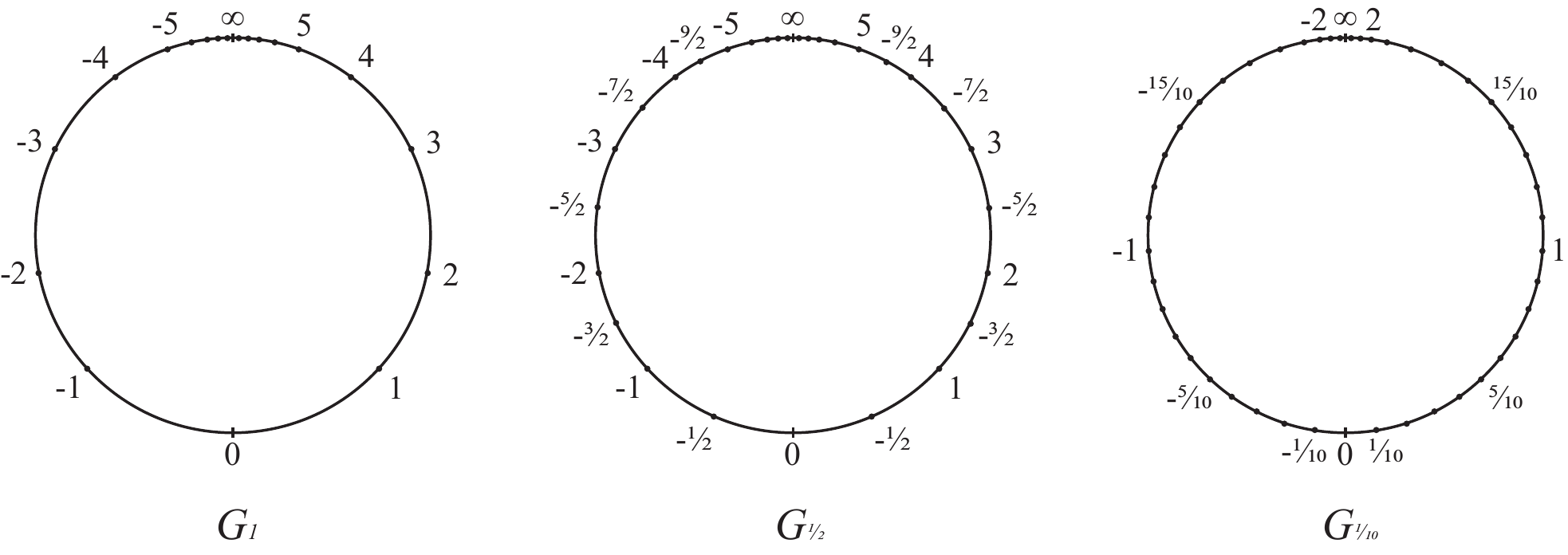} \end{center} \caption{ This shows what happens to $G_r$ when $r$ gets smaller and smaller. In $G_{1/10}$, the circle is more filled by the dots. If one looks at a fixed neighborhood of $0$, then one will see more and more points. } \label{fig:realine1} \end{figure}

We saw what would happen in Chabauty space via pictures. Now let's see how one can write this in a more precise way. The proof of the following lemma provides the way we should think about the Chabauty topology. 
In particular, note that we never need to know explicitely the metric $d$.

\begin{lem} \label{ChabR} $\overline{G_r}$ converges to $\overline{G_0} = \{0, \infty\}$ as $r \to \infty$. \end{lem}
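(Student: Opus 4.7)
The plan is to verify $d_H(\overline{G_r}, \{0,\infty\}) \to 0$ as $r \to \infty$ by bounding separately the two quantities whose max defines the Hausdorff distance, namely
$$\alpha_r := \sup_{a \in \overline{G_r}} \inf_{b \in \{0,\infty\}} d(a,b) \qquad \text{and} \qquad \beta_r := \sup_{b \in \{0,\infty\}} \inf_{a \in \overline{G_r}} d(a,b).$$
The term $\beta_r$ is immediate: both $0$ and $\infty$ belong to $\overline{G_r}$ for every $r > 0$, so $\beta_r = 0$.

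For $\alpha_r$, I would use the fact that the elements of $\overline{G_r}$ are $0$, $\infty$, and the points $\pm r, \pm 2r, \pm 3r, \ldots$. Thus $\alpha_r$ equals $\sup_{k \in \ZZ\setminus\{0\}} \min\bigl(d(kr,0), d(kr,\infty)\bigr)$, since the contributions from $0$ and $\infty$ are $0$. The intuition is that as $r\to\infty$, every nonzero element $kr$ has large absolute value and hence lies very close to $\infty$ in the metric $d$ on $\overline{\RR}$.

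To make this precise without ever writing $d$ down explicitly, I would fix $\varepsilon > 0$ and appeal to the topology of the one-point compactification. The open $d$-ball $B_d(\infty,\varepsilon)$ is a neighborhood of $\infty$ in $\overline{\RR}$, so its complement is a compact subset of $\RR$; in particular it is contained in some symmetric interval $[-M,M]$. Then for any $r > M$ and any $k \in \ZZ \setminus \{0\}$, one has $|kr| \geq r > M$, whence $kr \in B_d(\infty,\varepsilon)$ and so $d(kr,\infty) < \varepsilon$. This forces $\alpha_r \leq \varepsilon$, and combined with $\beta_r = 0$ yields $d_H(\overline{G_r}, \{0,\infty\}) \leq \varepsilon$ for all $r > M$, establishing the convergence.

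The main (and only) obstacle is the one I have just addressed: translating ``$kr$ is far from $0$'' into ``$kr$ is close to $\infty$ in $d$.'' Since the lemma statement stresses that the explicit form of $d$ is never needed, the right move is to bypass the metric entirely and use only the defining property of the one-point compactification, namely that neighborhoods of $P_\infty$ are complements of compact subsets of $\RR$. Once this observation is in place, the rest is bookkeeping with $\sup$s and $\inf$s.
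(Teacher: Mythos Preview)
Your proof is correct and follows essentially the same approach as the paper: fix $\varepsilon>0$, use that the $\varepsilon$-ball about $\infty$ has compact complement in $\RR$ to find a threshold $M$, and then observe that for $r>M$ every nonzero element of $G_r$ lies within $\varepsilon$ of $\infty$. The only cosmetic difference is that you note $\beta_r=0$ exactly (since $\{0,\infty\}\subset\overline{G_r}$), whereas the paper merely bounds the corresponding term by~$\varepsilon$.
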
 
\begin{proof} Note that for any compact subset $K$ of $\RR$, there exists a $M >0 $ such that $G_r - \{0\} \subset K^c$ for all $r \ge M$. Also, the complements of compact subsets form a basis of neighborhood of $\infty$.

Let $\epsilon >0$ be arbitrary and $N$ be the $\epsilon$-ball around $\infty$ in $\overline{\RR}$.
 Let $M >0$ be large enough so that $G_r - \{0\} \subset N$ for all $ r \ge M$.
 For all such $r$, the Hausdorff distance between $\overline{G_r}$ and $\overline{G_0}$ is defined by \[ d_H(\overline{G_r},\overline{G_0}) = \max ( \sup_{x \in \overline{G_r} } d(x, \overline{G_0}), \sup_{y \in \overline{G_0}} d(y, \overline{G_r})). \] First, look at $ \sup_{x \in \overline{G_r} } d(x, \overline{G_0})$.
 When $x = 0$, $d(x, \overline{G_0}) = 0$, and else $d(x,\infty)\leq d(x, \overline{G_0}) \leq \epsilon$ by the choice of $r$.
 The second term $ \sup_{y \in \overline{G_0}} d(y, \overline{G_r})$ is bounded above by $\epsilon$ for the same reason, so $d_H(\overline{G_r},\overline{G_0}) \leq \epsilon$.
 Therefore, $\overline{G_r} \to \overline{G_0}$ in the Chabauty topology as $r \to \infty$.
 For a pictorial description of the proof, see Figure \ref{fig:realine2}.
 \end{proof}

\begin{figure}[ht] \begin{center} \includegraphics[scale=0.8]{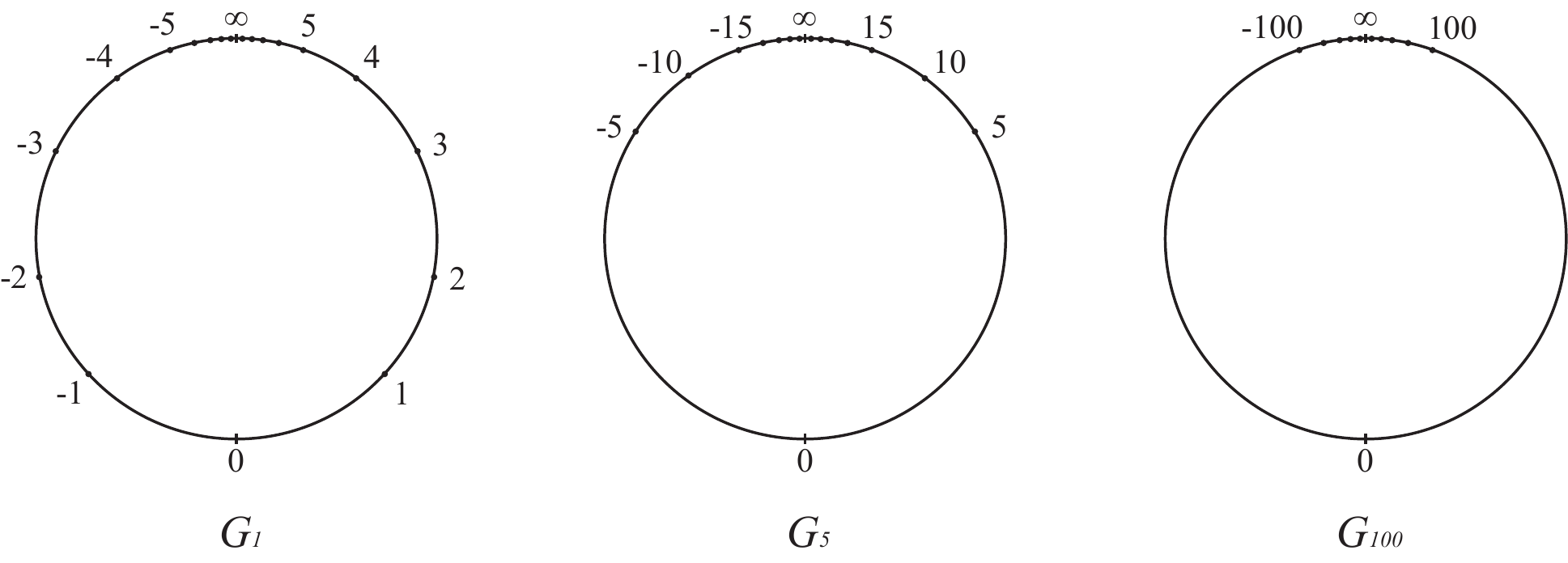} \end{center} \caption{ This shows what happens to $G_r$ when $r$ gets bigger and bigger. In $G_{100}$, most points are concentrated in a small neighborhood of $\infty$.} \label{fig:realine2} \end{figure}

Now, it is easy to deduce from the above two lemmas that the Chabauty space of $\RR$ is isomorphic to the closed interval $[0,\infty]$.
 
What about the Chabauty space of $\RR^n$ for bigger $n$?
Already the case of $\RR^2$ is quite elaborate: 
the Chabauty space of $\RR^2$ is homeomorphic to the 4-sphere
 in which sits a non-tamely embedded 2-sphere, corresponding to the set of non-lattice subgroups of $\RR^2$.
 See \cite{PourHubb} by I. Pourezza and J. Hubbard.
 A partial generalization of this result for $\RR^n$ can be found in \cite{Benoit}.

Let us now turn to the example of the Chabauty space of $\CC^\ast$.

\section{The Chabauty space of $\CC^\ast$}
 In this section, we study the Chabauty space of $\CC^\ast = \CC -\{0\}$.
 $\CC^\ast$ is an annulus with infinite modulus, so it is conformally equivalent to $\CC / 2i\pi \ZZ$ which is the model of $\CC^\ast$ we are going to use.
 We should emphasize that Lemma 1 and Propositions 2 and 3 are not original results, and can be found in \cite{HaettelRZ} or \cite{Ismael}.
 We studied this space independently along the line of work regarding \cite{BC2}, and still think the pictures we provide here are very useful to visualize the Chabauty space in this case.

Recall that any closed subgroup of $\CC$ is isomorphic to exactly one of the following groups: $\{0\}$, $\ZZ$, $\ZZ^2$, $\RR$, $\ZZ \times \RR$, $\CC$.





\begin{lem} \label{lem:subCl} 
The followings are all the closed subgroups of $\CC$ containing $2i\pi$. 
\begin{itemize} 
\item $A^{m} := (2\pi/m) i \ZZ$ for some $m \in \NN$, 
\item $B_z^{m} := z\ZZ + (2\pi/m)i\ZZ$ for some $m \in \NN$ and for $z \in \CC$ with $\RE(z) >0$ and $\IM(z) \in [0,m]$, 
\item $C_x := x \ZZ + i \RR$ for $x >0$, 
\item $D_{t}^{m} := (2\pi/m)i\ZZ + (1+i t)\RR$ with $t \in \RR$ and $m \in \NN$,
\item $A^0 = C_\infty := i \RR$, 
\item $C_0 := \CC$. 
\end{itemize} \end{lem}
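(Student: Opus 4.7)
The plan is to walk through the classification quoted just before the lemma---every closed subgroup of $\CC$ is isomorphic to exactly one of $\{0\}$, $\ZZ$, $\ZZ^2$, $\RR$, $\ZZ\times\RR$, $\CC$---and, for each type, read off which concrete representatives contain $2i\pi$.

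The trivial group is immediately excluded. A rank-one discrete subgroup $v\ZZ$ contains $2i\pi$ if and only if $v=2i\pi/n$ for some nonzero $n\in\ZZ$; absorbing the sign into $n$ yields the list $A^m$, $m\in\NN$. A real line $w\RR$ through the origin contains $2i\pi$ if and only if $w\RR=i\RR$, giving the single subgroup $A^0=C_\infty$. The whole plane is $C_0=\CC$. So the substantive work concerns the two remaining types.

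For $H\cong\ZZ^2$, observe that $H\cap i\RR$ is a nontrivial discrete subgroup of $i\RR$ containing $2i\pi$, hence equals $(2\pi/m)i\ZZ$ for some $m\in\NN$ by the rank-one case above. The image $\RE(H)\subset\RR$ is then isomorphic to $\ZZ$---as the rank-one torsion-free quotient of $H\cong\ZZ^2$ by its rank-one subgroup $H\cap i\RR$---so $\RE(H)=x_0\ZZ$ for some $x_0>0$. Any lift $z\in H$ of $x_0$ satisfies $H=z\ZZ+(2\pi/m)i\ZZ$, and translating $z$ by elements of $(2\pi/m)i\ZZ$ brings $\IM(z)$ into the stated fundamental domain, realising $H$ as $B_z^m$. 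For $H\cong\ZZ\times\RR$, write $H=v\ZZ+w\RR$ with $v,w$ real-linearly independent, and split on whether the continuous direction $w\RR$ equals $i\RR$. If it does, $v$ is determined modulo $i\RR$, so after a sign change $H=x\ZZ+i\RR=C_x$ for a unique $x>0$. If not, then $2i\pi\notin w\RR$, yet $2i\pi\in H$ forces $2i\pi=nv+sw$ with $n\in\ZZ$ nonzero and $s\in\RR$; replacing $v$ by $v-(s/n)w$ leaves $H$ unchanged and puts $v$ in $i\RR$, so $v=(2\pi/m)i$ after sign normalization, while rescaling $w$ by $1/\RE(w)$ produces $w=1+it$, giving $H=D_t^m$.

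The only step requiring genuine care is the $\ZZ^2$ case, where one has to pick two generators compatibly by combining the intersection-with-$i\RR$ argument, the real-part projection onto $\RR$, and a translation into a fundamental domain. The remaining cases each amount to a one-parameter normalization of a single generator modulo an obvious ambiguity, which is entirely routine.
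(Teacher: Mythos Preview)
Your argument follows the same case-by-case route as the paper's proof, walking through the isomorphism types of closed subgroups of $\CC$ and normalizing generators in each case; in fact you supply more detail than the paper does, particularly in the lattice case where you explicitly use $H\cap i\RR$ and $\RE(H)$ to extract a basis. One small slip: in the $\ZZ\times\RR$ case with $w\RR\neq i\RR$, the correct replacement is $v+(s/n)w$ (so that $n\bigl(v+(s/n)w\bigr)=nv+sw=2i\pi\in i\RR$), not $v-(s/n)w$.
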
 

\begin{proof} Let $\Gamma$ be a closed subgroup of $\CC$ containing $2i\pi$. Then $\Gamma$ contains $A^1$. If $\Gamma$ is discrete, it must contain $ A^{m}$ for some maximal $m$; if $\Gamma$ is isomorphic to $\RR$, it must be $C_\infty=i\RR$. 

Also, if $\Gamma$ is a lattice containing $ A^{m}$ for a maximal $m$, it must be of the form $B_z^{m}$ for $z$ verifying $\RE(z) >0$ and $\IM(z) \in [0,m]$. Thus, suppose $\Gamma$ is isomorphic to $\RR \times \ZZ$. There are two cases. \newline 
Case 1: $A^{m}$ is the $\ZZ$ part. Then $\Gamma$, as a set, is the union of parallel lines of finite slope $t$ passing through the points of $A^{m}$. Thus $\Gamma$ is $D_t^{m}$. \newline 
Case 2: $A^{m}$ is contained in the $\RR$ part. Then $\Gamma$ contains $C_{\infty}$ and is the union of vertical lines equally spaced in the horizontal direction.

Hence $\Gamma = C_x$ for some $x>0$.

\end{proof}





\begin{prop} \label{prop:chabeasy} In the Chabauty topology, we have the following convergence results. \begin{itemize} \item $A^{m_n} \to \begin{cases} A^0 = i \RR \text{ if } m_n \to \infty \\A^{m} \text{ if } m_n \to m \in \NN \end{cases}$ \item $ B_{z_n}^{m_n} \to \begin{cases} C_x \text{ if } m_n \to \infty \text{ and } \RE(z_n)\to x\\A^{m} \text{ if } m_n \to m \in \NN \text{ and } \RE(z_n)\to \infty\\B_z^{m} \text{ if } m_n \to m \in \NN \text{ and } z_n\to z \text{ with }\RE(z)\in (0,\infty) \\\end{cases}$ \item $C_{x_n} \to \begin{cases} C_0 =\CC \text{ if } x_n \to 0 \\C_\infty = i \RR \text{ if } x_n \to \infty \\C_x \text{ if } x_n \to x \in (0, \infty) \end{cases}$ \item $D_{t_n}^{ m_n} \to \begin{cases} \CC \text{ if } m_n \to \infty \text{ or } t_n \to \pm \infty \\D_t^{m} \text{ if } t_n \to t \in \RR \text{ and } m_n \to m \in \NN \end{cases}$ \end{itemize} \end{prop}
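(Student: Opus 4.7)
The plan is to treat each of the twelve convergence statements by the same template as the proof of Lemma \ref{ChabR}: fix $\epsilon>0$, choose a compact $K_\epsilon\subset\CC^{\ast}$ so that $K_\epsilon^c$ lies in the $\epsilon$-ball around $P_\infty$ in $\overline{\CC^{\ast}}$, and then split the Hausdorff distance $d_H(\overline{H_n},\overline{H})$ into two contributions. Points in either set lying outside $K_\epsilon$ are automatically within $\epsilon$ of $P_\infty\in \overline{H_n}\cap\overline{H}$, so they contribute at most $\epsilon$. The only real work is to control $d_H^{\CC^{\ast}}(H_n\cap K_\epsilon,\, H\cap K_\epsilon)$, which is a finite Euclidean estimate on a fixed compact region of the cylinder $\CC/2i\pi\ZZ$. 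In every case the desired bound will follow from choosing $n$ large enough so that (i) any point of $H\cap K_\epsilon$ has a point of $H_n$ within $\epsilon$, and (ii) any point of $H_n\cap K_\epsilon$ has a point of $H$ within $\epsilon$.

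For the $A^{m_n}$ bullet, when $m_n\to m\in\NN$ the sequence is eventually constant equal to $A^m$, so the claim is trivial; when $m_n\to\infty$ the spacing $2\pi/m_n$ on $i\RR$ tends to $0$, making $A^{m_n}\cap K_\epsilon$ an $\epsilon$-net in $i\RR\cap K_\epsilon=A^0\cap K_\epsilon$. For the $B_{z_n}^{m_n}$ bullet, in the regime $m_n\to m$ and $z_n\to z$ with $\RE z\in(0,\infty)$, continuity of the lattice generators gives the result: each basis point $k z_n+\ell(2\pi/m)i$ in $K_\epsilon$ is close to $kz+\ell(2\pi/m)i\in B_z^m$. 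In the regime $m_n\to m$ and $\RE z_n\to\infty$, every point $k z_n+\ell(2\pi/m)i$ with $k\neq 0$ eventually leaves $K_\epsilon$, so $B_{z_n}^{m_n}\cap K_\epsilon$ reduces to $A^m\cap K_\epsilon$. In the mixed regime $m_n\to\infty$ and $\RE z_n\to x$, the same two phenomena combine: the vertical spacing tends to $0$ while the horizontal translates appear at $\RE\approx x$, producing $C_x=x\ZZ+i\RR$. The $C_{x_n}$ bullet is handled analogously: $x_n\to x\in(0,\infty)$ is pure continuity, $x_n\to\infty$ uses that every non-imaginary vertical line leaves $K_\epsilon$, and $x_n\to 0$ uses that the horizontal spacing of the lines $kx_n+i\RR$ tends to $0$, making them $\epsilon$-dense in $\CC\cap K_\epsilon$.

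The last bullet, concerning $D_{t_n}^{m_n}$, is where I expect the main obstacle. The case $t_n\to t$, $m_n\to m$ is continuity as above, and the case $m_n\to\infty$ is easy because the vertical spacing already collapses to $0$, pulling in all translates of $(1+it_n)\RR$ to fill $\CC$. The delicate subcase is $t_n\to\pm\infty$ with $m_n\to m$ bounded: here the vertical spacing $2\pi/m$ does \emph{not} shrink, so the density argument cannot be applied directly on $i\RR$. Instead one must observe that the perpendicular distance between two consecutive parallel lines of slope $t_n$, offset by $2\pi/m$ vertically, equals $(2\pi/m)/\sqrt{1+t_n^2}\to 0$. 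Thus on the bounded region $K_\epsilon$ the union of these lines becomes $\epsilon$-dense, giving convergence to $\CC$. This geometric observation is the only nontrivial ingredient beyond continuity and the ``points escape to infinity'' principle used everywhere else, and it is the step I would present with the most care, perhaps with a picture analogous to Figure \ref{fig:realine2}.
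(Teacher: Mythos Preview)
Your proposal is correct and in fact does considerably more than the paper: the paper's own ``proof'' of this proposition consists of the single sentence that the assertions are easier than or similar to Proposition~\ref{prop:r2limits} and are left to the reader. Your case-by-case treatment, using the template of Lemma~\ref{ChabR} (restrict to a compact $K_\epsilon$, let points outside contribute at most $\epsilon$ via $P_\infty$, and estimate Hausdorff distance on $K_\epsilon$), is a valid and complete way to carry out what the paper omits, and is entirely in the spirit of both Lemma~\ref{ChabR} and the $\epsilon$-filling argument of Proposition~\ref{prop:r2limits}. The observation you single out as the only nontrivial step --- that for $t_n\to\pm\infty$ with $m_n$ bounded the perpendicular spacing $(2\pi/m)/\sqrt{1+t_n^2}$ between consecutive lines of $D_{t_n}^{m_n}$ tends to zero --- is exactly the right ingredient for that subcase.
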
 

\begin{proof} The proofs of these assertions are either easier than or similar to the proof of Proposition \ref{prop:r2limits} below; they are therefore left to the reader to check. 
\end{proof}

\begin{prop} \label{prop:r2limits} Let $(z_n=x_n+ 2i\pi\theta_n)$ be a converging sequence of complex numbers, with $x_n>0$, $x_n\to 0$, $\theta_n\in [0,1]$, $\theta_n\to \theta$. If $\theta$ is rationnal, say $\theta=p/q$ with $p$, $q$ coprime positive integers, define for all $n$ $t_n$ to be the slope of the line passing through 0 and $q z_n -2i\pi p$, i.e. \[ t_n=\dfrac{2\pi}{x_n}(\theta_n-\theta ). \] Then the limit in the Chabauty topology of the sequence $ B_{z_n}^{m} $ is \[ \begin{cases} D_t^{ \lcm(m,q)} \text{ if } t_n\to t \in \RR \\\CC \text{ if } t_n\to \pm \infty \end{cases} \]

If $\theta$ is irrationnal then $ B_{z_n}^{m} \to \CC$. \end{prop}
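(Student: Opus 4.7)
Pass to a subsequence so that $t_n$ has a limit in $[-\infty,+\infty]$; the proof then splits into three cases: $\theta=p/q$ rational with $t_n\to t\in\RR$, $\theta$ rational with $t_n\to\pm\infty$, and $\theta$ irrational. Throughout I work with the two-sided Chabauty criterion: $B_{z_n}^m\to\Gamma$ if and only if every $\gamma\in\Gamma$ is the limit of some $\gamma_n\in B_{z_n}^m$, and conversely every limit of such a sequence lies in $\Gamma$.

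\textbf{Finite $t$ case.} To prove $D_t^M\subseteq\lim B_{z_n}^m$ (where $M=\lcm(m,q)$), since $A^m\subseteq B_{z_n}^m$ for all $n$ it suffices to produce elements converging to $(2\pi/q)i$ and to each $r(1+it)$ with $r\in\RR$. Using Bezout integers $a,b$ with $ap-bq=1$, the element $az_n-bm(2\pi/m)i\in B_{z_n}^m$ converges to $(2\pi/q)i$, whence $A^q$ and hence $A^M=A^q+A^m$ lie in the limit. The small vector $w_n=qz_n-2i\pi p=qx_n(1+it_n)$ lies in $B_{z_n}^m$, tends to $0$, and has direction tending to $(1,t)$, so $\lfloor r/(qx_n)\rfloor w_n\to r(1+it)$, filling in $(1+it)\RR$. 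For the reverse inclusion, take $h_n=K_nz_n+\ell_n(2\pi/m)i\to h$, perform Euclidean division $K_n=qs_n+r_n$, extract a subsequence with $r_n\equiv r\in\{0,\ldots,q-1\}$, and rewrite $h_n=s_nw_n+rz_n+\ell'_n(2\pi/m)i$ using $qz_n=w_n+2i\pi p$ and $2i\pi p\in A^m$. Then $rz_n\to 2i\pi rp/q\in A^M$, real-part convergence forces $s_nw_n\to\RE(h)(1+it)$, and so $\ell'_n(2\pi/m)$ converges; by discreteness of $(2\pi/m)\ZZ$ it is eventually constant, so $\IM(h)-t\RE(h)\in 2\pi rp/q+(2\pi/m)\ZZ\subseteq (2\pi/M)\ZZ$, i.e.\ $h\in D_t^M$.

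\textbf{Divergence cases.} Here I must show every $a+ib\in\CC$ is approached by some $h_n=K_nz_n+\ell_n(2\pi/m)i$. Take $K_n$ in a symmetric window of half-length $L_n$ around the nearest integer to $a/x_n$, and leave $\ell_n$ free. The real-part error is at most $L_nx_n$, while as $K_n$ ranges over the window the imaginary-part residue modulo $2\pi/m$ traces an arithmetic progression of $2L_n+1$ terms with step $\tilde\alpha_n:=2\pi\theta_n\bmod(2\pi/m)$, so the task becomes to choose $L_n$ with $L_nx_n\to 0$ and this AP becoming $o(1)$-dense in $\RR/(2\pi/m)\ZZ$. In the rational $t_n\to\pm\infty$ case, $\tilde\alpha_n$ converges to a generator of the discrete subgroup of $\RR/(2\pi/m)\ZZ$ of order $q':=q/\gcd(q,m)$, carrying a perturbation $2\pi\eta_n$ per step (with $\eta_n=\theta_n-\theta$); choosing $L_n\sim 1/(M|\eta_n|)$ makes this perturbation sweep across one full gap of the discrete subgroup with spacing $\sim q'|\eta_n|\to 0$, while $L_nx_n\sim 1/|t_n|\to 0$ precisely because $|t_n|\to\infty$. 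In the irrational case $\tilde\alpha_\infty=2\pi\theta\bmod(2\pi/m)$ is irrational with respect to $2\pi/m$, so Weyl equidistribution yields $o(1)$-density once $L_n\to\infty$, and picking $L_n$ of order $x_n^{-1/2}$ makes $L_nx_n\to 0$ while a soft continuity argument absorbs the drift $\tilde\alpha_n\to\tilde\alpha_\infty$.

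\textbf{Main obstacle.} The substantive step is this double balance on $L_n$ in the divergence cases. In the rational $t_n\to\pm\infty$ case the hypothesis $|t_n|\to\infty$ is exactly the quantitative statement $x_n\ll|\eta_n|$, and it is this inequality that simultaneously permits $L_n$ to be large enough for the AP perturbation to cross one period of the limiting rational step $\tilde\alpha_\infty$ and small enough that $L_nx_n\to 0$. The irrational case requires an analogous matching between the Weyl equidistribution rate and the window-size constraint $L_nx_n\to 0$; the rest of the effort is bookkeeping of the subgroup chain $A^m\subseteq A^M\supseteq A^q$ and of the Euclidean-division identity $qz_n\equiv w_n\pmod{A^m}$ used in the finite-$t$ argument.
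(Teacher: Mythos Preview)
Your proof is correct, but it takes a genuinely different route from the paper's. The paper's key observation is that $B_{z_n}^{m}\subset D_{t_n}^{\lcm(m,q)}$ and that every line of $D_{t_n}^{\lcm(m,q)}$ already contains a point of $B_{z_n}^{m}$; since $x_n\to 0$, the lattice $\epsilon$-fills this $D$-group, hence $d_H\!\bigl(B_{z_n}^{m},\,D_{t_n}^{\lcm(m,q)}\bigr)\to 0$, and one finishes by invoking the previously established limits $D_{t_n}^{M}\to D_t^{M}$ or $D_{t_n}^{M}\to\CC$ from Proposition~\ref{prop:chabeasy}. This handles the finite-$t$ and $t_n\to\pm\infty$ subcases simultaneously, and the paper simply declares the irrational case ``immediate''.

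You instead verify the two-sided Chabauty criterion directly: B\'ezout and Euclidean division give both inclusions in the finite-$t$ case, and a window-plus-arithmetic-progression density argument (with a Weyl-type input for irrational $\theta$) covers the two divergence cases. What you gain is self-containment and an explicit justification of the irrational case the paper skips; what you lose is brevity and uniformity, since the paper's trick of comparing with the \emph{moving} target $D_{t_n}^{M}$ collapses all the rational subcases into a single Hausdorff-distance estimate and delegates the rest to Proposition~\ref{prop:chabeasy}.
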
 \begin{proof} The case where $\theta$ is irrationnal is immediate; let us suppose that $\theta\in \QQ$.

Note, just by drawing all lines of slope $t_n$ passing through points of $B_{z_n}^{m}$, that $B_{z_n}^{m} \subset D_{t_n}^{ \lcm(m,q)} $. Moreover, a closer look on the intersection between all those lines and the imaginary axis $i \RR$ shows that on every line of $D_{t_n}^{\frac{mq}{\gcd(pm,q)}}$ there is actually at least one point of $B_{z_n}^{m}$, as soon as $\theta_n$ is close enough to $\theta$. Since $p$ and $q$ are coprime, $\frac{mq} {\gcd(pm,q)}=\lcm(m,q)$, thus there is at least one point of $B_{z_n}^{m}$ on each line of $D_{t_n}^{\lcm(m,q)}$.

Finally, since $x_n\to 0$, we can find for every $\epsilon>0$ an integer $N$ large enough so that for all $n\geq N$, $B_{z_n}^{2 \pi/m}$ $\epsilon$-fills $D_{t_n}^{\lcm(m,q)}$ (i.e. every point of $D_{t_n}^{\lcm(m,q)}$ is at distance at most $\epsilon$ of a point of $B_{z_n}^{2 \pi/m}$). Therefore the Hausdorff distance between $B_{z_n}^{2 \pi/m}$ and $D_{t_n}^{\lcm(m,q)}$ tends to zero, and we are done. \end{proof}


Let us interpret these results geometrically (compare with \cite{HaettelRZ}).

First, let us describe the space of subgroups $ D_t^{m}$, for $m \in \NN$ and $t \in [-\infty, \infty]$. By Proposition \ref{prop:chabeasy}, $D_t^{m} \to \CC$ for any $m$ if $t \to \pm \infty$. Thus we get a bouquet of circles; one circle for each $m \in \NN$, the wedge point corresponding to the total subgroup $\CC$. We also know that when $m \to \infty$, $D_t^{m} \to \CC$ for any $t$. Thus when we increase $m$, the corresponding circle in the bouquet shrinks down to the wedge point. We call this space the $D$-bouquet. See Figure \ref{fig:Dbouquet} below.

\begin{figure}[ht] \begin{center} \includegraphics[scale=0.4]{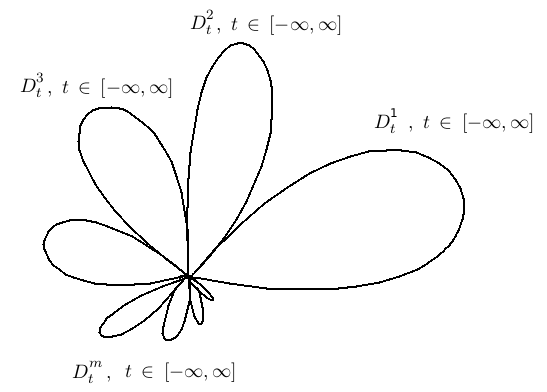} \end{center} \caption[$D$-bouquet in the Chabauty space of $\CC^*$]{The $D$-bouquet, a bouquet of circles; the wedge point represents the total subgroup~$\CC$.} \label{fig:Dbouquet} \end{figure}

Now set $m \in \NN$ to be fixed. We would like to see how the closure of the space of subgroups $B_z^{m}$ looks like, for $z$ verifying $\RE(z)>0$ and $\IM(z)\in [0,m]$. 

But since two subgroups $B_z^{m}$, $B_{z'}^{m}$ for $z,z'$ as above, are the same if and only if $z=z' \mod 2i\pi/m $, the space of $B_z^{m}$'s is the cylinder \[ \{z=x+iy ;\; x >0 ,\, y \in [0, m] \} \Big/ (x \sim x+ 2i\pi/m). \] By Proposition \ref{prop:chabeasy}, if $x \to \infty$, then $B_z^{m} \to A_m$ in the Chabauty topology. Therefore the space becomes a cone in the right direction. The identification of the other end is more complicated. Say $x \to 0$ and $y \to 2\pi \theta$ with $\theta\in [0,m]$. By Proposition \ref{prop:r2limits}, there are two cases to consider.

Case 1: $\theta$ is irrational. Then $B_z^{m}$ converges to $\CC$.

Case 2: $\theta$ is rational, say $\theta=p/q$. Then $B_z^{m} \to D_t^{\lcm(m ,q)}$ where $t=\lim \frac{2\pi} {x_n}(\theta_n-\theta )$. For $p$ and $q$ fixed, every possible limit for $t$ is possible; hence we have to blow up the point $ 0+ 2i\pi p/q$ at the left of the cylinder to a segment corresponding to $D_t^{\lcm(m ,q)}$ with $t \in [-\infty,\infty]$. Now since $D_{\pm\infty}^{\lcm(m ,q)}=\CC$, we still have to pinch the endpoints of that segment to a point, as in Figure \ref{fig:pinching} below.

\begin{figure}[ht] \begin{center} \includegraphics[scale=0.3]{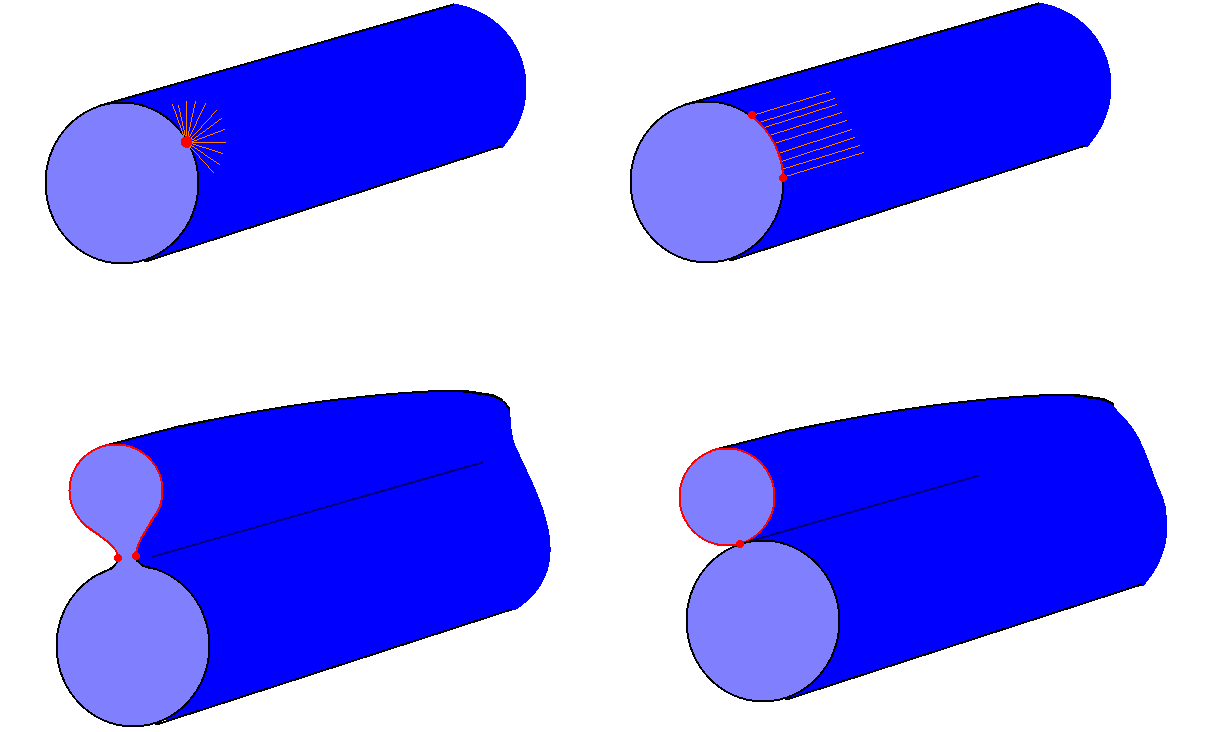} \end{center} \caption[Pinching in the $D$-bouquet]{At the left of the cylinder, each point $2i\pi\theta$ with $\theta$ rational is blown-up to a circle, resulting from a pinching. First step: $2i\pi\theta$ (in red in the first picture) is blown-up to a segment (in red in the second picture). In orange, each ray represents a locus of constant slope $t_n$ (see Proposition \ref{prop:r2limits} for notations). Second step: force the end-points of this segment to get closer and closer together (third picture) until the pinching (last picture). } \label{fig:pinching} \end{figure}

Therefore the left end of the cylinder, where $x = 0$, is glued to $D$-bouquet, in such a way that all points $2i\pi \theta$ with $\theta$ irrational are collapsed to the wedge point of the bouquet, and the other points are blown up to some circles of the bouquet.

Note that $\theta_1=p_1/q_1 $ and $\theta_2=p_2/q_2 $ are blown up to the \textit{same circle}, as long as $\lcm(m, q_1) = \lcm(m, q_2)$. Thus if $m=1$, then this end is exactly the $D$-bouquet; but whenever $m > 1$, the end is glued to a proper subbouquet, containing only petals of index in $m\NN$.

We call the resulting space the $m$th layer, noted $L_m$.

We can now collect every result of Propositions \ref{prop:chabeasy} and \ref{prop:r2limits} into a global picture for the Chabauty space of $\CC^*$ (see Figure \ref{fig:mlayer2}).

\begin{figure}[ht] \begin{center} \includegraphics[scale=0.3]{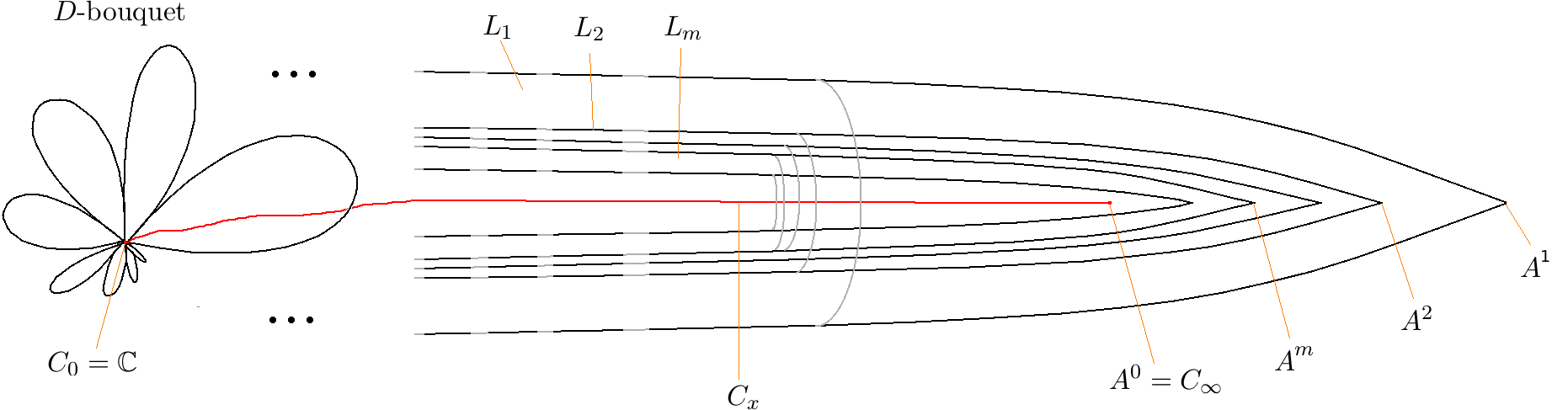} \end{center} \caption{The Chabauty space of $\CC^*$} \label{fig:mlayer2} \end{figure}

\section{Motivation for these pictures: limit of abelian subgroups of $\PC$.}

Consider the set $\HHH_l$ of all elements of $\PC$ fixing the same common axis $l$. Here, we think of $\PC$ as the group of automorphisms of the hyperbolic 3-space $\HH^3$, and $l$ is an axis in $\HH^3$, determined by its two endpoints on $\CP1=\partial \HH^3$. The set $\HHH_l$ is actually a group isomorphic to $\CC^\ast$. For example, when $l$ is the axis connecting $0$ to $\infty\in\CP1$, $\HHH_l$ is the collection of transformations $[z\mapsto\alpha z]$ for $\alpha\in\CC^\ast$.

Therefore, the set of abelian subgroups of $\PC$ sharing the same fixed axis is a copy of the Chabauty space of $\CC^\ast$, sitting inside the Chabauty space of $\PC$.

What happens to $\HHH_l$ if you continuously shrink $l$ to a point, i.e. if you make its endpoints converge to the same limit point?

It is well-known that you get some subgroups of parabolics fixing this endpoint. Since the set of parabolic elements fixing the same point is a group isomorphic to $\CC$ (for instance, the parabolic elements fixing $\infty$ are the $[z\mapsto z+\beta]$ for $\beta\in\CC$), the space of such subgroups is a copy of the Chabauty space of $\CC$, which we saw from \cite{PourHubb} to be homeomorphic to the 4-sphere.

The problem of describing how copies of the Chabauty space of $\CC^\ast$ accumulate to this 4-sphere appears of interest to us. 
It is dealt with in \cite{BC2}.

\bibliographystyle{alpha} \bibliography{biblio}

\end{document}